\newtheorem{theorem}{Theorem}
\newtheorem{cor}{Corollary}
\newcommand{\conv}{\mathop{\rm conv}\nolimits}
\title {Kakutani type Borsuk--Ulam theorem}
\author {Oleg R. Musin}
\begin{document}

	\ifpdf \DeclareGraphicsExtensions{.pdf, .jpg, .tif, .mps} \else
	\DeclareGraphicsExtensions{.eps, .jpg, .mps} \fi	
	
\date{}
\maketitle

\begin{abstract}  In this paper we present a Kakutani type theorem that is equivalent to the Borsuk--Ulam theorem for manifolds. 
\end{abstract}

\medskip









For a set $S$ denote by $2^S$ the set of all subsets of $S$, including the empty set and $S$ itself. Let $X$ and $Y$ be topological spaces. Say that $F:X\to 2^Y$ is a {\it set-valued function on $X$ with a closed graph} if graph$(F)=\{(x,y)\in X\times Y: y\in F(x)\}$ is a closed as a subset of $X\times Y$. 

 The Kakutani fixed point theorem \cite{Kakutani} states:\\ {\it Let $S$ be a non-empty, compact and convex subset of ${\Bbb R}^d$. Let $F:S\to 2^S$ be a set-valued function on $S$ with a closed graph and the property that $F(x)$ is non-empty and convex for all $x\in S$. Then there is $y\in S$ such that $y\in F(y)$. (The point $y$ is called a fixed point.)}



\medskip

In our paper \cite{MusBUT} we extended the Borsuk--Ulam theorem for manifolds.\\  
Let  $M$ be a compact PL (piece-wise linear) $d$-dimensional manifold without boundary with a free simplicial involution $A:M\to M$, i. e. $A^2(x)=A(A(x))=x$ and $A(x)\ne x$. 
We say that a pair $(M,A)$ is a {\it BUT (Borsuk-Ulam Type) manifold} if for any continuous  $g:M \to {\Bbb R}^d$ there is a point $x\in M$ such that $g(A(x))=g(x)$. Equivalently, if a continuous  map $f:M \to {\Bbb R}^d$  is { antipodal}, i. e. $f(A(x))=-f(x)$,  then the zeros set $Z_f:=f^{-1}(0)$ is not empty. 

In \cite{MusBUT}, we found several equivalent necessary and sufficient conditions for manifolds to be BUT. For instance, {\it $M$ is a BUT manifold if and only if $M$ admits an antipodal continuous transversal to zeros map $h:M \to {\Bbb R}^d$ with $|Z_h|=2\pmod{4}$.} 

The class of BUT manifolds is sufficiently large. It is clear that $({\Bbb S}^d,A)$ with $A(x)=-x$ is a BUT-manifold.   Suppose that $M$ can be represented as a connected sum $N\# N$, where $N$ is a closed PL manifold. Then $M$ admits a free involution. Indeed, $M$ can be  ``centrally symmetric" embedded to ${\Bbb R}^k$ with some $k$ and the antipodal symmetry $x\to -x$ in ${\Bbb R}^k$ implies a free involution $A:M\to M$ \cite[Corollary 1]{MusBUT}. For instance,   orientable  two-dimensional  manifolds $M^2_g$ with even genus $g$ and non-orientable manifolds $P^2_m$ with even $m$, where $m$ is the number of  M\"obius bands,  are BUT-manifolds.

\begin{theorem}
Let $M$ be a compact PL BUT--manifold  of dimension $d$ with a free involution $A$. 
 Let $F:M\to 2^{{\Bbb R}^d}$ be a set-valued function on $M$ with a closed graph and the property that for all $x\in M$ the set $F(x)$ is non-empty and convex in ${\Bbb R}^d$ and  there is $y\in F(x)$ such that $(-y)\in F(A(x))$. Then there is $x_0\in M$ such that  $F(x_0)$ covers the origin $0\in{\Bbb R}^d$.
\end{theorem}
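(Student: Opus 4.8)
The plan is to deduce the set-valued assertion from the single-valued Borsuk--Ulam property of $(M,A)$ in two stages: first I would symmetrize $F$ into a genuinely antipodal set-valued map, and then I would approximate that map by odd continuous maps, to which the BUT hypothesis applies directly.

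The first step is to replace $F$ by
\[
 G(x):=F(x)\cap\bigl(-F(A(x))\bigr),\qquad x\in M .
\]
The hypothesis that some $y\in F(x)$ has $-y\in F(A(x))$ means precisely that $y\in F(x)$ and $y\in -F(A(x))$, so $G(x)\neq\emptyset$ for every $x$; each $G(x)$ is convex, being the intersection of the convex set $F(x)$ with the image $-F(A(x))$ of a convex set under $y\mapsto -y$, and it is closed because $\mathrm{graph}(G)=\mathrm{graph}(F)\cap\{(x,y):(A(x),-y)\in\mathrm{graph}(F)\}$ is the intersection of two closed sets (the second being the preimage of $\mathrm{graph}(F)$ under the continuous involution $(x,y)\mapsto(A(x),-y)$). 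The point of this construction is that, since $A^2=\mathrm{id}$,
\[
 G(A(x))=F(A(x))\cap\bigl(-F(x)\bigr)=-\bigl(F(x)\cap(-F(A(x)))\bigr)=-G(x),
\]
so $G$ is exactly antipodal. Because $0\in G(x_0)$ forces $0\in F(x_0)$, it now suffices to find $x_0$ with $0\in G(x_0)$.

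The second step uses approximation together with the BUT property. Regarding $G$ as an upper semicontinuous map with compact convex values (see the final paragraph), for each $\delta>0$ the standard partition-of-unity (Cellina) construction produces a continuous $f\colon M\to{\Bbb R}^d$ with $f(x)\in G(x)+\delta B$ for all $x$, where $B$ is the closed unit ball. To restore oddness I would set $\tilde f(x):=\tfrac12\bigl(f(x)-f(A(x))\bigr)$, which satisfies $\tilde f(A(x))=-\tilde f(x)$; and since $f(A(x))\in G(A(x))+\delta B=-G(x)+\delta B$ gives $-f(A(x))\in G(x)+\delta B$, convexity of $G(x)$ yields $\tilde f(x)\in G(x)+\delta B$ as well. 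Thus $\tilde f$ is an antipodal continuous map $M\to{\Bbb R}^d$, so by the BUT property its zero set is non-empty: choose $x_\delta$ with $\tilde f(x_\delta)=0$. Then $\dist(0,G(x_\delta))\le\delta$, so there is $g_\delta\in G(x_\delta)$ with $|g_\delta|\le\delta$. Letting $\delta=1/n\to0$ and using compactness of $M$, a subsequence of $(x_{1/n})$ converges to some $x_0$ while $g_{1/n}\to0$; since $\mathrm{graph}(G)$ is closed, $(x_0,0)\in\mathrm{graph}(G)$, i.e.\ $0\in G(x_0)\subseteq F(x_0)$, as desired.

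The main obstacle I anticipate is the regularity needed for the approximation, since the values $G(x)$, although closed and convex, need not be bounded, so $G$ need not be upper semicontinuous and the Cellina approximation is not immediately available. I would address this by reducing to compact values. One useful fact, which follows directly from the closed graph over the compact manifold $M$, is that $x\mapsto\dist(0,G(x))$ is lower semicontinuous: if $x_n\to x$ and the nearest points $c(x_n)\in G(x_n)$ stay bounded along a subsequence, their limit lies in $G(x)$ by closedness of the graph and realizes $\liminf_n\dist(0,G(x_n))\ge\dist(0,G(x))$. In particular, were $0\notin G(x)$ for every $x$, this distance would attain a positive minimum $m>0$, and it would then suffice to build an odd continuous $\tilde f$ with $\tilde f(x)\in G(x)+\tfrac{m}{2}B$, forcing $|\tilde f(x)|\ge m/2>0$ everywhere and contradicting the BUT property. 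Carrying out this truncated approximation carefully -- choosing the selection values near the origin so that only a bounded, hence upper semicontinuous, portion of $G$ is used -- is the one genuinely delicate point, and it is where I would concentrate the technical effort.
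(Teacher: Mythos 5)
Your first step is correct and is in fact a cleaner packaging of what the paper's argument does implicitly: the intersection $G(x)=F(x)\cap\bigl(-F(A(x))\bigr)$ is non-empty, convex, exactly antipodal ($G(A(x))=-G(x)$), has closed graph, and $0\in G(x_0)$ forces $0\in F(x_0)$. (The paper's Corollary uses the Minkowski difference $F(x)-F(A(x))$ for the same symmetrization; your intersection has the advantage of staying inside $F(x)$.) The genuine gap is exactly where you located it, and it is not a cosmetic one: Cellina's approximation theorem requires upper semicontinuity, and a closed graph over a compact domain does \emph{not} imply u.s.c.\ when the values are unbounded closed convex sets. Your proposed repair is also not yet a proof: the truncation $G(x)\cap RB$ can be empty on the set where $\dist(0,G(x))>R$, and since $\dist(0,G(\cdot))$ is only lower semicontinuous it need not be bounded above on $M$, so no single $R$ works and the truncated map need not inherit a closed graph. (A smaller inaccuracy: even for u.s.c.\ maps Cellina produces only a graph approximation, i.e.\ $f(x)\in G(x')+\delta B$ for some $x'$ with $\dist(x,x')<\delta$, not $f(x)\in G(x)+\delta B$; your limiting argument survives this weakening, but the statement you invoke is stronger than the theorem.)

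The fix is to abandon the partition-of-unity machinery and build the odd approximations by hand, which is what the paper does: take a sequence of antipodal triangulations $T_i$ of $M$ with mesh tending to $0$, at each vertex $v$ choose any $y_v\in G(v)$ and set $y_{A(v)}:=-y_v$, and extend piecewise linearly to an odd continuous $f_i:M\to{\Bbb R}^d$. This construction uses no semicontinuity at all --- only that each $G(v)$ is non-empty --- and the BUT property yields $x_i$ with $f_i(x_i)=0$, i.e.\ $0=\sum_k t_k^i y_k^i$ with $y_k^i\in G(v_k^i)$ and all vertices $v_k^i\to x_0$; closedness of the graph and convexity of $G(x_0)$ then conclude exactly as in your final paragraph. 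In short, the role you assign to Cellina's theorem is played by piecewise-linear interpolation over a fine antipodal triangulation, for which the hypotheses you lack are not needed. One caveat applies to both routes: to pass to the limit one must still argue that the selected values $y_k^i$ stay bounded along a subsequence (the paper asserts convergence from compactness of $M$, which does not by itself control points of ${\Bbb R}^d$), so your instinct to worry about unboundedness is sound --- it is the choice of cure, not the diagnosis, that needs to change.
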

\begin{proof} Note that any PL manifold admit a metric. For a triangulation $T$ of $M$ the norm of $T$, denoted by $|T|$, is the diameter of the largest simplex in $T$.
	
	Let $\{T_i,\, i=1,2,\ldots\}$ be a sequence of antipodal triangulations of $M$ with $|T_i|\to0$. Now we define an antipodal mapping $f_i:M\to {\Bbb R}^d$.  Note that if we define $f_i$ for all vertices $V(T_i)$, then $f_i$ can be piece-wise linearly extended to all $T_i$, i. e.  to $f_i:M\to {\Bbb R}^d$. 
	
	 Let $x\in V(T_i)$. Then $A(x)$ is also a vertex of $T_i$. Let $y\in F(x)$ be a point in ${\Bbb R}^d$ such that $(-y)\in F(A(x))$. Then for the pair $(x,A(x))$ set 
$f_i(x):=y$ and $f_i(A(x)):=-y$. 

Now we have a continuous antipodal mapping $f_i:M\to {\Bbb R}^d$. By assumption, $M$ is BUT, therefore, there is a point $x_i\in M$ such that $f_i(x_i)=0$. Now, suppose that $x_i$ lies in a $d$-simplex $s_i$ of $T_i$ with vertices $v_0^i,\dots,v_d^i$  and let $y_k^i:=f_i(v_k^i)$. (Note that $y_k^i\in F(v_k^i)\subset{\Bbb R}^d$.) We have $0\in \conv(y_0^i,\dots,y_d^i)$ in ${\Bbb R}^d$. Let $t_0^i,\dots,t_d^i$ 
be the barycentric coordinates of $0$ relative to the simplex  in ${\Bbb R}^d$ with vertices $y_0^i,\dots,y_d^i$. Then  
$$
\sum\limits_{k=0}^d{t_k^iy_k^i}=0.
$$

Since $M$ is compact the sequences $\{x_i\}_{i=1}^\infty,\, \{t_k^i\}_{i=1}^\infty,\, \{y_k^i\}_{i=1}^\infty$, $k=0,1,\ldots,d$, may, after possibly renumbering them, be assumed to converge to points $x_0, t_k$ and $y_k, \, k=0,1,\ldots,d$ respectively. 

Finally, since the set-valued function $F$ has closed graph, $y_k\in F(x_0),\, k=0,1,\ldots,d$. Since
$F$ takes convex values, $F(x_0)$ is convex and so  $0\in{\Bbb R}^d$, being a convex combination of the $y_k$ is in $F(x_0)$. Thus $x_0$ is the required point in $M$.
\end{proof}

\begin{cor} Let $M$ be a  compact PL BUT--manifold of dimension $d$ with a free involution $A$. 
 Let $F:M\to 2^{{\Bbb R}^d}$ be a set-valued function on $M$ with a closed graph.   Then there is $x_0\in M$ such that the sets $F(x_0)$ and $F(A(x_0))$ have a common intersection point.
\end{cor}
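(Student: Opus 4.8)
The plan is to deduce the Corollary from the Theorem by replacing $F$ with the set-valued \emph{difference} map. I understand $F$ to have the non-empty compact convex values of the Theorem (these are needed for its conclusion to make sense and are implicitly inherited here), and I define $G:M\to 2^{{\Bbb R}^d}$ by
$$
G(x):=F(x)-F(A(x))=\{u-v:\,u\in F(x),\ v\in F(A(x))\}.
$$
The motivation is the equivalence $0\in G(x)\iff F(x)\cap F(A(x))\neq\emptyset$: indeed $0\in G(x)$ means $u=v$ for some $u\in F(x)$, $v\in F(A(x))$, and such a common value is exactly an intersection point. Hence it suffices to find $x_0$ with $0\in G(x_0)$, which is precisely the conclusion ``$G(x_0)$ covers the origin'' of the Theorem applied to $G$.

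Next I would check that $G$ satisfies the hypotheses of the Theorem. Non-emptiness and convexity of each $G(x)$ follow at once from those of $F(x)$ and $F(A(x))$, since the Minkowski difference of two non-empty convex sets is non-empty and convex. The antipodal pairing condition is automatic, and this is exactly why the extra hypothesis of the Theorem can be dropped in the Corollary: using $A^2=\mathrm{id}$,
$$
G(A(x))=F(A(x))-F(A^2(x))=F(A(x))-F(x)=-\bigl(F(x)-F(A(x))\bigr)=-G(x).
$$
Thus for \emph{every} $y\in G(x)$ we have $-y\in G(A(x))$, so the point of $G(x)$ whose negative lies in $G(A(x))$ always exists.

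The hard part will be verifying that $G$ has a closed graph, since, unlike the formal properties above, closedness of a Minkowski difference can fail when the sets are unbounded. To handle this I would argue by compactness. Suppose $(x_n,w_n)\in\mathrm{graph}(G)$ with $(x_n,w_n)\to(x,w)$, and write $w_n=u_n-v_n$ with $u_n\in F(x_n)$ and $v_n\in F(A(x_n))$. Using that $F$ has bounded range over the compact manifold $M$ (this boundedness is already implicit in the convergence step of the Theorem's proof, where the $y_k^i\in F(v_k^i)$ are extracted convergent), the sequences $\{u_n\}$ and $\{v_n\}$ are bounded, so after passing to a subsequence $u_n\to u$ and $v_n\to v$. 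Since $A$ is continuous, $A(x_n)\to A(x)$; the closed graph of $F$ then forces $u\in F(x)$ and $v\in F(A(x))$, whence $w=u-v\in G(x)$. This proves $\mathrm{graph}(G)$ is closed.

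Finally I would apply the Theorem to $G$. All of its hypotheses now hold, so there exists $x_0\in M$ with $0\in G(x_0)$. By the equivalence recorded in the first paragraph, $F(x_0)\cap F(A(x_0))\neq\emptyset$, which is the assertion of the Corollary.
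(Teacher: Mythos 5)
Your proof takes exactly the paper's route: form the Minkowski difference $\bar F(x)=F(x)-F(A(x))$, observe that $\bar F(A(x))=-\bar F(x)$, and apply the Theorem to $\bar F$. You are in fact more careful than the paper, whose proof is a one-line reduction; your explicit check that $\bar F$ still has a closed graph (which requires the boundedness of the values that you correctly flag) and your remark that the non-emptiness/convexity hypotheses on $F$ must be carried over from the Theorem fill in details the paper leaves implicit.
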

\begin{proof} Let $\bar F(x), \, x\in M,$ be the Minkowski difference of $F(x)$ and $F(A(x))$ in ${\Bbb R}^d$, 
$$
\bar F(x):=F(x)-F(A(x))=\{a-b| a\in F(x), b\in F(A(x)).
$$
Then $\bar F(A(x))=-\bar F(x)$. Thus, the theorem for $\bar F$ yields this corollary. 
\end{proof}

\end{document}